\theoremstyle{plain}
\newtheorem{theorem}{Theorem}[section]
\newtheorem{Lem}[theorem]{Lemma}
\newtheorem*{theorem*}{Theorem}
\newtheorem{Cor}[theorem]{Corollary}
\newcommand{\diff}{\textrm{Diff}^{\: 1}(M)}
\newcommand{\ep}{\epsilon}
\newcommand{\rem}{\medskip \noindent \textbf{Remark: }}
\newcommand{\real}{\mathbb{R}}
\newcommand{\bb}{\mathbb}
\newcommand{\D}{\displaystyle}
\newcommand{\bbR}{\mathbb{R}}
\author{Anne E. McCarthy}
\title{rigidity of trivial actions of abelian-by-cyclic groups }
\begin{document}
\maketitle
{\footnotesize 
  \centerline{ Department of Mathematics }
  \centerline{ Fort Lewis College}
  \centerline{ Durango, CO, 81301} }

\begin{abstract} Let $\Gamma_A$ denote the abelian-by-cyclic group associated to an integer-valued, non-singular matrix $A$.  We show that if $A$ has no eigenvalues of modulus one, then there are no faithful $C^1$ perturbations of the trivial action $ \iota: \Gamma_A \to \diff$, where $M$ is a compact manifold.
\end{abstract}
\medskip
\section{introduction}

The question of existence and stability of global fixed points for group actions has been studied in many different contexts. In the setting of actions of Lie groups it was shown by Lima \cite{lima} that $n$ commuting vector fields on a genus $g$ surface, $\Sigma_g,$ of non-zero Euler characteristic have a common singularity.  This implies that any action of the abelian Lie group $\real^n$ on $\Sigma_g$ has a global fixed point.  It was later shown by Plante \cite{plante}  that any action of a nilpotent Lie group on a surface with non-zero Euler characteristic has a global fixed point.  

The study of stability of global fixed points for group actions is also related to the study of foliations. Given a foliation of a manifold with a compact leaf $L$ and a transverse disk $D,$ the holonomy map along $L$ defines an action of $\pi_1(L)$ on the disk $D$.  Perturbations of group actions are related to the study of foliations, for given a nearby leaf $L'$ diffeomorphic to $L$, the holonomy along $L'$ defines a new action that is a perturbation of the original.  The Thurston stability theorem gives conditions for local stability of $C^1$ foliations of a compact manifold.  Methods of Thurston were then modified by Langevin and Rosenberg \cite{langv}, Stowe \cite{stow1} \cite{stow2}, and Schweitzer \cite{schw} to establish results regarding stability of global fixed points, and leaves of fibrations.

Inspired by the ideas of Lima \cite{lima}, Bonatti \cite{bon} used methods similar to Thurston's to show that any $\mathbb{Z}^n$ action on surfaces with non-zero Euler characteristic generated by diffeomorphisms $C^1$ close to the identity has a global fixed point. Using similar techniques, Druck, Fang and Firmo \cite{dff} proved a discrete version of Plante's theorem.  The dynamics of group actions generated by real analytic diffeomorphisms close to the identity were studied by  Ghys \cite{ghys}, who showed that such actions were either recurrent, or displayed a property similar to solvability of the group.

We examine how a particular family of solvable groups acts via diffeomorphisms close to the identity on a compact manifold.  We show that such actions display near-rigidity, in the sense that there are no faithful actions with generators close to the identity.  It is an easy consequence that actions close to the identity of these groups on a compact surface of non-zero Euler characteristic have a global fixed point. 

Let $A$ be an invertible $n \times n$ matrix with integer entries.  To the matrix $A$ one can associate the solvable group
$$\Gamma_A = \langle a, b_1, ... b_n \  | \ b_ib_j=b_jb_i, \ ab_ia^{-1} =\prod_j b_j^{A_{ij}} \rangle. $$
For instance, when $ A= \left(\begin{array}{cc} 2 & 3 \\ 4 & 5 \end{array}\right) $ the associated group is 
$$\Gamma_A = \langle a, b_1, b_2 \ | \ b_1b_2=b_2b_1, \  ab_1a^{-1} = b_1^2b_2^3,  \  ab_2a^{-1} = b_1^4b_2^5 \rangle. $$
In the case where $A$ is a $1 \times 1$ matrix, $A = [n] $, the associated group is the \textit{Baumslag-Solitar group},
$$BS(1,n) = \langle a, b \ | \ aba^{-1} = b^n \rangle. $$
These groups have a geometric interpretation as the fundamental group of the space $\mathbb{T}^n \times [0,1]/ \sim $ where ends are glued via the toral endomorphism induced by $A$.     A group $\Gamma$ is said to be \textit{abelian-by-cyclic} if there exists an exact sequence
$$ 1 \to \mathcal{A} \to \Gamma \to Z \to 1,$$
where the group $\mathcal{A}$ is abelian, and $Z$ is an infinite cyclic group.  Note that the commutator subgroup $[\Gamma, \Gamma]$ is contained in $ \mathcal{A}$, so all such $\Gamma$ are solvable groups.  The class of all finitely presented, torsion free, abelian-by-cyclic groups is exactly given by groups of the form $\Gamma_A$. See \cite{fm} for a nice proof of this.

We are interested in actions of the groups $\Gamma_A$ on compact manifolds in the case where the matrix $A$ does not have an eigenvalue of modulus one. Given a finitely generated group $\Gamma$ and a manifold $M$, a \textit{$C^r$ action of $\Gamma$ on $M$} is a homomorphism $\rho: \Gamma \to \textrm{Diff}^{\, r}(M)$. We commonly refer to this homomorphism as a representation (into $\textrm{Diff}^{\, r}(M)$) and use the associated language.  The representation $\rho$ is said to be \textit{faithful} if $\rho$ is injective.  We denote by $\mathcal{R}^r(\Gamma, M)$ the collection of all representations of $\Gamma$ into $\textrm{Diff}^{\, r}(M)$.  

We now fix some notations that we use for the remainder of this exposition.  Let $M$ be a compact manifold embedded in $\mathbb{R}^\ell$.  Let $ \| \cdot \| $ denote the standard Euclidean metric on $\mathbb{R}^\ell$. Given a $C^1$ diffeomorphism $h$, we will define
$$ \| h \|_1 = \sup_{x \in M} \{ \| h(x)\| + \| D_x h \| \}.$$
With this we define the $C^1$ distance
$$ d(h, g) = \| h-g \|_1. $$

The collection of $C^1$ representations $\mathcal{R}^1(\Gamma, M)$ carries a topology.  For the group $\Gamma$, fix a generating set $ \langle \gamma_1, ..., \gamma_k \rangle$, and let
$$d_{_{C^1}}(\rho_{_1}, \rho_{_2}) = \sup_{\gamma_1, ... , \gamma_k} d_{_{C^1}}(\rho_{_1}(\gamma_i), \rho_{_2}(\gamma_i)). $$
This distance depends on the choice of generating set for the group $\Gamma$.   However, given any two generating sets the associated metrics are equivalent. We now state our main theorem.

\begin{theorem}
\label{thm.abcnoact} Let $M$ be a compact manifold and $A$ be a non-singular $n \times n$ matrix with integer entries. Let $\langle a, b_1, \dots b_n \rangle$ be generators $\Gamma_A$.  If $A$ has no eigenvalue of modulus one, then there exists $\ep >0$ such that any $C^1$ action $\rho: \Gamma_A \to \diff$  with $d_{C^1}(\rho,id) < \ep$ is not faithful.  In particular, $ \rho(b_i) = id $ for all $i = 1,2, \cdots n$.
\end{theorem}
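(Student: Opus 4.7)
The plan is to analyze the displacement maps $h_i(x) := \rho(b_i)(x) - x \in \bbR^\ell$ (using the embedding $M \subset \bbR^\ell$) and to prove $h_i \equiv 0$ for all $i$, which is the \emph{in particular} statement and immediately implies non-faithfulness. Set $f := \rho(a)$ and $g_i := \rho(b_i)$; the hypothesis $d_{C^1}(\rho, \text{id}) < \ep$ gives $\|h_i\|_\infty, \|Df - I\|_\infty, \|Dg_i - I\|_\infty < \ep$ and hence $\|Df^{\pm 1}\|_\infty \le 1 + 2\ep$ for $\ep$ small.

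\medskip

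The first step is to extract an approximate cocycle equation from the defining relation $f g_i f^{-1} = \prod_j g_j^{A_{ij}}$. Evaluating both sides at $f(y)$ and Taylor-expanding each side in the displacements via the estimate $\|u(y+v) - u(y) - Du(y)\, v\| \le 2\ep\, |v|$ (which requires only $C^1$-closeness, not $C^2$ regularity), one obtains
$$ Df(y)\, h_i(y) \;=\; \sum_j A_{ij}\, h_j(f(y)) \;+\; E_i(y), $$
with $\|E_i\|_\infty \le C_A\, \ep\, \|h\|_\infty$, where $\|h\|_\infty := \max_j \|h_j\|_\infty$ and $C_A$ depends only on the entries of $A$. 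Crucially, the error is \emph{linear} in $\|h\|$ with prefactor $\ep$; the commutative product on the right expands to produce $O_A(1)$ cross-terms of the form $(Dg_j - I)\, h_k = O(\ep\,\|h\|)$, all absorbed into $E_i$.

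\medskip

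Next I would diagonalize the action of $A$ over $\mathbb{C}$. For each left eigenvector $v$ with $vA = \lambda v$, the combination $H_v := \sum_i v_i h_i$ satisfies $Df(y)\, H_v(y) = \lambda\, H_v(f(y)) + e_v(y)$ with $\|e_v\|_\infty \le C_A'\, \ep\, \|h\|_\infty$. I would then iterate along $f$-orbits: when $|\lambda| < 1$, rewrite $H_v(y) = \lambda (Df(y))^{-1} H_v(f(y)) + (Df(y))^{-1} e_v(y)$ and iterate forward; when $|\lambda| > 1$, solve for $H_v(f(y))$ and iterate backward. Using $\|Df^{\pm 1}\|_\infty \le 1+2\ep$ and choosing $\ep$ small enough that $|\lambda|^{\pm 1}(1+2\ep) < 1$ for every eigenvalue of $A$ (possible since $|\lambda| \ne 1$), the leading geometric term vanishes as $k \to \infty$, leaving $\|H_v\|_\infty \le C_{A,\lambda}''\, \ep\, \|h\|_\infty$. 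Summing over a basis of left eigenvectors and inverting the change-of-basis matrix yields $\|h\|_\infty \le C_A'''\, \ep\, \|h\|_\infty$, which forces $\|h\|_\infty = 0$ once $\ep < 1/C_A'''$; hence $\rho(b_i) = \text{id}$ for every $i$.

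\medskip

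The principal technical hurdle will be establishing the linear-in-$\|h\|$ bound on $E_i$ with a constant depending only on $A$. This demands a careful expansion of the $n$-fold commutative product $\prod_j g_j^{A_{ij}}$ to first order in the displacements, collecting the cross-terms $(Dg_j - I)\, h_k$ that arise because each $h_k$ must be evaluated at a shifted base point rather than at $x$; each such term is $O(\ep\,\|h\|)$ by $C^1$-closeness, and the number of them is polynomial in the entries of $A$. Once this uniform bound is in hand, the spectral iteration runs cleanly and all constants remain uniform in the action $\rho$, depending only on $A$ and $\ep$.
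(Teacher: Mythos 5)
Your argument is essentially correct in outline, reaches the same conclusion $\rho(b_i)=\mathrm{id}$, and takes a genuinely different route from the paper. The paper also begins from the approximate intertwining relation (its Lemma \ref{lem.hyp} is your cocycle equation, applied to $f^{k}$ rather than $f$, with the error controlled pointwise by the local displacements rather than by the global $\|h\|_\infty$), but it then runs a maximum-principle argument: it fixes $k$ in advance so that $A^{k}$ uniformly expands $\bb{E}^u$ and contracts $\bb{E}^s$, uses compactness to select a point and coordinate at which $\max\{\|\pi_u\|,\|\pi_s\|\}$ of the displacement columns is maximal, and obtains a contradiction from a \emph{single} application of $f^{\pm k}$ at that extremal point. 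You instead sum a geometric series along an infinite forward or backward $f$-orbit for each spectral component, using compactness only to guarantee $\|H_v\|_\infty<\infty$ so the leading term $\lambda^{m}H_v(f^{m}(y))$ dies, and you land on the self-improving bound $\|h\|_\infty\le C\ep\|h\|_\infty$. Your version avoids the paper's careful advance calibration of $k,\theta^u,\theta^s,\eta,C_k$ at the price of needing the error term controlled by the global sup norm, which is harmless here; both arguments use compactness of $M$ essentially, consistent with the paper's remark that the theorem fails on noncompact manifolds.

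Two points need attention before this is a complete proof. The substantive one: your final step presumes a basis of left eigenvectors, i.e.\ that $A$ is diagonalizable over $\bb{C}$, whereas the theorem allows matrices such as $\left(\begin{smallmatrix}2&1\\0&2\end{smallmatrix}\right)$ with nontrivial Jordan blocks. The fix is routine --- induct along Jordan chains (for $v_2A=\lambda v_2+v_1$ the equation for $H_{v_2}$ acquires the forcing term $H_{v_1}\circ f$, already known to be $O(\ep\|h\|_\infty)$ from the previous step of the induction), or replace $A$ by a power and use adapted norms on the generalized eigenspaces as the paper does --- but as written your argument does not cover the general case. The minor one: the estimate $\|u(y+v)-u(y)-Du(y)\,v\|\le 2\ep\|v\|$ integrates $Du$ along the segment from $y$ to $y+v$, which need not lie in the embedded copy of $M$; you should either extend to a tubular neighborhood or fall back on the $o(\|v\|)$ uniform-continuity treatment the paper uses when it fixes $\ep_0$. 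Neither issue changes the architecture of your proof.
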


\rem This can be viewed as a rigidity result in the following sense.  Consider the trivial representation 
$\iota: \Gamma_A \to \textrm{Diff}^{\,1}(M),$ given by $\iota(a) = \iota(b_i) =id$.  Note that any assignment $\rho: \Gamma_A \to  \textrm{Diff}^{\,1}(M)$ with  $\rho(a) = f$ and $\rho(b_i) =g_i =id$ determines an action.  Therefore, there are infinitely many perturbations of the trivial representation of the form $\rho(g_i) = id$. The main theorem states that representations with $\rho(b_i) =id$ are in fact the only $C^1$ perturbations of the trivial action.  The compactness hypothesis is necessary for this theorem, see remarks at the end of Section 3 for further discussion.

This result is not true for $C^0$ perturbations. The groups $\Gamma_A$ are discrete subgroups of the affine group. The standard representations are given by $\alpha(a)(x) = \lambda x$, and $\alpha(b_i)(x) = x+v_i$, where $\lambda$ is an eigenvalue of $A$ with corresponding eigenvector $v$. An example of Lima \cite{lima} gives two vector fields for which the associated flows give an action of the affine group on the sphere $S^2$.  This example produces actions arbitrarily close to the identity in $\mathcal{R}^0(\Gamma_A, S^2)$, by taking sufficiently small time-$t$ maps of the flows by which the affine group acts. In fact, this action has no global fixed point.

Under the hypotheses of Theorem \ref{thm.abcnoact}, we characterize global fixed points:
\begin{Cor} If $d_{C^1}(\rho - id) < \ep,$ then any fixed point of $\rho(a)$ is a global fixed point for $\rho.$
\end{Cor}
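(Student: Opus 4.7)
The plan is to deduce the corollary directly from Theorem \ref{thm.abcnoact}, treating it as a near-immediate consequence. First, I would invoke the main theorem with the same $\ep$ to conclude that, under the hypothesis $d_{C^1}(\rho, id) < \ep$, the representation satisfies $\rho(b_i) = id$ for every $i = 1, \dots, n$. This means that every point of $M$ is fixed by each $\rho(b_i)$, so the only obstruction to a point being a global fixed point of the action of $\Gamma_A$ is whether it is fixed by the remaining generator $\rho(a)$.

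Next, I would spell out the tautological step: a point $x \in M$ is a global fixed point of $\rho$ if and only if $\rho(\gamma)(x) = x$ for every $\gamma \in \Gamma_A$, and since $\Gamma_A$ is generated by $\{a, b_1, \dots, b_n\}$, this reduces to checking fixation under $\rho(a), \rho(b_1), \dots, \rho(b_n)$. Having disposed of the $b_i$'s, I conclude that $x$ is a global fixed point if and only if $\rho(a)(x) = x$, which is precisely the statement of the corollary. There is no real obstacle here; the corollary is essentially a restatement of Theorem \ref{thm.abcnoact} from the perspective of the dynamics of $\rho(a)$ alone.
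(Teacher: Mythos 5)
Your proposal is correct and matches the paper's (implicit) reasoning: the paper states the corollary without proof, treating it as an immediate consequence of Theorem \ref{thm.abcnoact}, exactly as you do by noting that $\rho(b_i)=id$ reduces the question of a global fixed point to fixation under $\rho(a)$ alone.
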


\noindent
\emph{Acknowledgments.}
This project grew out of dissertation work completed at Northwestern University.  Special thanks are extended to Amie Wilkinson for her guidance and many helpful conversations during the course of this project.  I would also like to thank Christian Bonatti for discussions conveying valuable intuitions regarding the behavior of diffeomorphisms close to the identity.  
\section{Preliminary Tools}

Central to Thurston's argument is that given a non-trivial holonomy map $H$ about a compact leaf $L$, if the linear action $dH : \pi_1(L) \to GL(k, \bbR^k)$ is trivial then there is non-trivial representation $u: \pi_1(L) \to \bbR^k$.  Stowe  \cite{stow1} later showed that in the case where the linear action is non-trivial, there is an analogous cocyle $u: G \to \bbR^k$. Further results regarding how this representation behaves under perturbation have been proved by Langevin and Rosenberg \cite{langv}, Stowe  \cite{stow1}, \cite{stow2}, and Schweitzer \cite{schw}.  This body of work establishes the following lemma.

\begin{Lem}[Thurston]
\label{lem.TrivOrHomo} Let $G$ be a group with finitely many generator $g_1, g_2, \dots g_r$, and let $M$ be a manifold embedded in $\bbR^\ell$.  Suppose there exist an action $\rho : G \to \diff$ and a point $p \in M$ for which $\rho(g)(p)=p$ for all $g \in G$, and $D_p\rho_0(g) = id$ for all $g \in G$.  Then either 
\begin{enumerate}
\item There is a neighborhood of $p$ consisting of points that are fixed by all actions near $\rho$ in $\mathcal{R}^1(G,M)$ or
\item There exist actions $\rho_k \to \rho$ in $\mathcal{R}^1(G, M)$ and points $x_k \to p$ in $M$ with $x_k$ not fixed by $\rho_k$ for which the sequence $u_k : G \to \bbR^\ell$ given by 
$$ u_k(g) = \frac{\rho_k(g)x_k - x_k}{ \max \left\{ \left\| \rho_k(g_j)x_k - x_k  \right\| : 1 \leq j \leq r \right\} } $$
converges to a non-trivial homomorphism $u: G \to T_pM$.
\end{enumerate}
\end{Lem}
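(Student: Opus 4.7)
The strategy is the classical Thurston dichotomy: assume (1) fails and build the homomorphism promised in (2) as a limit of normalized displacement vectors.

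\textbf{Step 1: Extracting the sequences.} Suppose conclusion (1) fails. Then for every neighborhood $U$ of $p$ and every $\delta>0$ there is some $\rho' \in \mathcal{R}^1(G,M)$ with $d_{C^1}(\rho',\rho)<\delta$ and some $x \in U$ with $\rho'(g)x \neq x$ for at least one $g \in G$. Taking a nested neighborhood basis $U_k \searrow \{p\}$ and $\delta_k \searrow 0$, we obtain $\rho_k \to \rho$ in $\mathcal{R}^1(G,M)$ and $x_k \to p$ in $M$ with $x_k$ not fixed by $\rho_k$. In particular $M_k := \max_{1\le j \le r} \| \rho_k(g_j)x_k - x_k \|$ is strictly positive, so the quantities $u_k(g)$ are well-defined.

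\textbf{Step 2: Passing to a limit and landing in $T_pM$.} By construction $\|u_k(g_j)\| \le 1$ for each generator, so after a diagonal subsequence we may assume $u_k(g_j)$ converges in $\bbR^\ell$ for every $j$. Moreover, we may pass to a further subsequence on which the index $j_k$ realizing the maximum in the denominator is constant, say $j_k \equiv j_0$; then $\|u_k(g_{j_0})\|=1$ for all $k$, so the limit $u(g_{j_0})$ is a unit vector. To see that any such limit $u(g)$ lies in $T_pM$, write
$$ u_k(g) = \frac{\|\rho_k(g)x_k - x_k\|}{M_k} \cdot \frac{\rho_k(g)x_k - x_k}{\|\rho_k(g)x_k - x_k\|} . $$
The scalar factor is bounded, and the unit factor is a secant direction between two points of $M$ both tending to $p$ (since $\rho(g)p=p$ and $\rho_k \to \rho$, $x_k \to p$); hence any accumulation point of $u_k(g)$ lies in $T_pM$.

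\textbf{Step 3: Cocycle identity and the homomorphism property.} For any $g,h \in G$,
$$ \rho_k(gh)x_k - x_k = \bigl[\rho_k(g)(\rho_k(h)x_k) - \rho_k(g)x_k\bigr] + \bigl[\rho_k(g)x_k - x_k\bigr]. $$
Applying $C^1$ Taylor expansion of $\rho_k(g)$ at $x_k$ to the first bracket and dividing by $M_k$ yields
$$ u_k(gh) = u_k(g) + D_{x_k}\rho_k(g)\,u_k(h) + \frac{o\!\left(\|\rho_k(h)x_k - x_k\|\right)}{M_k}. $$
Because $\rho_k \to \rho$ in $C^1$ on each generator, composition along the word expressing $g$ gives $\rho_k(g) \to \rho(g)$ in $C^1$; combined with $x_k \to p$ and the hypothesis $D_p\rho(g) = \mathrm{id}$, this yields $D_{x_k}\rho_k(g) \to \mathrm{id}$. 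The little-$o$ term divided by $M_k$ is bounded by $\|u_k(h)\|$ times a factor tending to $0$. Proceeding inductively on word length, one sees that $u_k(g)$ converges for every $g \in G$ and the limit satisfies $u(gh) = u(g)+u(h)$; together with the corresponding identity for inverses, this makes $u$ a homomorphism $G \to T_pM$. The vector $u(g_{j_0})$ has norm one, so $u$ is nontrivial.

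\textbf{Main obstacle.} The only delicate point is controlling the error term in the cocycle identity and the derivative $D_{x_k}\rho_k(g)$ uniformly enough to pass to the limit for arbitrary $g \in G$, not just generators. This is handled by the chain rule: for a fixed word $g=g_{i_1}^{\pm 1}\cdots g_{i_m}^{\pm 1}$ the derivative $D_{x_k}\rho_k(g)$ factors as a product of $m$ matrices, each of which converges to the identity by the $C^1$ hypothesis on generators, and the Taylor remainder for each factor is dominated by its modulus of continuity near $p$. Once this bookkeeping is in place, the rest of the argument is a routine extraction of subsequences.
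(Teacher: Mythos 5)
The paper does not actually prove this lemma: it is quoted as established by the cited body of work (Thurston, Stowe, Langevin--Rosenberg, Schweitzer), so there is no in-paper argument to compare against. Your proposal reconstructs the standard Thurston stability argument, and it is essentially correct: negate (1) to extract $\rho_k\to\rho$ and non-fixed $x_k\to p$; normalize so some generator has unit displacement; use the cocycle identity plus a first-order expansion to show the normalized displacements are asymptotically additive. The two points that carry all the weight, and which you correctly flag but treat briefly, are (i) the uniformity of the Taylor remainder over the varying maps and base points --- this is where the hypothesis $D_p\rho(g)=\mathrm{id}$ enters, since together with $C^1$ convergence $\rho_k(g)\to\rho(g)$ and continuity of $D\rho(g)$ it gives $\|\rho_k(g)(y)-\rho_k(g)(x)-(y-x)\|\leq \epsilon_k\|y-x\|$ with $\epsilon_k\to 0$ uniformly for $x,y$ near $p$; and (ii) the a priori bound $\|\rho_k(h)x_k-x_k\|\leq C_h M_k$ for non-generator words $h$, which must be bootstrapped by induction on word length before the error term $o(\|\rho_k(h)x_k-x_k\|)/M_k$ can be discarded. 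One cosmetic caveat: in the embedded setting $u_k(h)$ is a chord vector in $\bbR^\ell$ rather than a tangent vector at $x_k$, so ``$D_{x_k}\rho_k(g)\,u_k(h)$'' should be interpreted via a local extension or chart; this is standard and harmless. With those points spelled out, your argument is complete and is the same one underlying the references the paper cites.
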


The vector $ f(x) -x$ in $\mathbb{R}^\ell$ indicates how the point $x$ is moved by $f$. The above lemma characterizes such \textit{displacement vectors}. The homomorphism $u$ associates to each $g \in G$ a normalized displacement vector.  For points $x$ near the fixed point $p$, we note that the displacement of $x$ by $\rho(g_1g_2)$ is close to the vector sum of the displacements associated to $\rho(g_1)$ and $\rho(g_2)$.  Bonatti \cite{bon.thesis} modified the ideas of Thurston to provide estimates on how much the assignment $ g \mapsto \rho(g)(x)-x $ can differ from a homomorphism when the image of $\rho$ is close to the identity in $\mathcal{R}^1(G, M)$.  These modifications do not require a global fixed point with trivial linear action.

\begin{Lem}[Bonatti]
\label{lem.bon2}
For all $\eta > 0$, $N \in \mathbb{N}$, there exists $\ep > 0$ such
that the following property holds: Let $f_1, f_2, ... f_N$ be
diffeomorphisms $f_i : M \to M$ such that $d_{C^1}(f_i, id) < \ep$.
Then for all $x \in M,$
$$\Big\| \left( f_1 \circ f_2 \circ \cdots \circ f_N - id \right)(x) -
\sum_{i=1}^{N} \left( f_i -id \right)(x) \Big\| $$ 
$$ < \eta \sup_{i} \| \left(f_i - id \right)  (x)\|.$$
\end{Lem}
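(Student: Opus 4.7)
The plan is a straightforward telescoping argument: write the composition as a sum of displacements, each evaluated at an iterated image of $x$, and then use the $C^1$-smallness of $f_i - \text{id}$ to replace each such evaluation by $(f_i - \text{id})(x)$ at the cost of a small error.

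First I would set $g_i := f_i - \text{id}$, $y_{N+1} := x$, and $y_k := (f_k \circ f_{k+1} \circ \cdots \circ f_N)(x)$ for $1 \le k \le N$. Since $f_k(y_{k+1}) = y_{k+1} + g_k(y_{k+1})$, a telescoping of $y_1 - y_{N+1}$ yields
$$\bigl(f_1 \circ \cdots \circ f_N - \text{id}\bigr)(x) \;=\; \sum_{k=1}^{N} g_k(y_{k+1}),$$
so the quantity to be estimated is exactly $\sum_{k=1}^{N} \bigl[g_k(y_{k+1}) - g_k(x)\bigr]$. The hypothesis $d_{C^1}(f_i, \text{id}) < \varepsilon$ forces $\|Dg_i\| < \varepsilon$, so each $g_i$ is $\varepsilon$-Lipschitz, and hence $\|g_k(y_{k+1}) - g_k(x)\| \le \varepsilon \, \|y_{k+1} - x\|$.

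Next I would bound $\|y_{k+1} - x\|$ by $S := \sup_i \|g_i(x)\|$ up to a constant depending only on $N$. Starting from $\|y_{N+1}-x\|=0$ and using
$$\|y_k - x\| \;\le\; \|y_{k+1} - x\| + \|g_k(y_{k+1})\| \;\le\; (1+\varepsilon)\,\|y_{k+1} - x\| + \|g_k(x)\|,$$
a simple backward induction yields $\|y_{k+1} - x\| \le \bigl((1+\varepsilon)^{N}-1\bigr)\,\varepsilon^{-1}\cdot\|\text{something}\|$ — more cleanly, $\|y_{k+1}-x\| \le C(N)\, S$ whenever $\varepsilon \le 1$, where $C(N)$ depends only on $N$. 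Summing the Lipschitz estimate then gives
$$\Bigl\| (f_1 \circ \cdots \circ f_N - \text{id})(x) - \sum_{i=1}^{N}(f_i - \text{id})(x) \Bigr\| \;\le\; N\,\varepsilon\, C(N)\, S.$$
Choosing $\varepsilon < \min\{1, \eta/(N\, C(N))\}$ yields the conclusion.

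The argument is essentially routine; the only thing to watch is that the inductive constant $C(N)$ bounding $\|y_{k+1}-x\|/S$ must be taken uniform in $\varepsilon$ (which is why one restricts $\varepsilon \le 1$ at the outset, absorbing the $(1+\varepsilon)^{N}$ factor into a constant depending only on $N$). No global fixed point or linear-action hypothesis is needed — the key ingredient is simply the uniform Lipschitz smallness of $f_i - \text{id}$ coming from the $C^1$ norm.
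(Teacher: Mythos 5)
The paper does not prove this lemma: it is quoted from Bonatti's thesis (\cite{bon.thesis}) and stated without argument, so there is nothing of the author's to compare your proof against. Your telescoping-plus-Lipschitz argument is correct and is the standard way to establish the statement. The identity $(f_1\circ\cdots\circ f_N-\mathrm{id})(x)=\sum_{k=1}^N g_k(y_{k+1})$ is exact, the backward induction giving $\|y_{k+1}-x\|\le C(N)\sup_i\|g_i(x)\|$ once $\ep\le 1$ is sound, and the resulting choice of $\ep$ is uniform in $x$, as required. One detail deserves an explicit sentence in a written-up version: the step $\|g_k(y_{k+1})-g_k(x)\|\le\ep\,\|y_{k+1}-x\|$ invokes the mean value inequality for $g_k\colon M\to\mathbb{R}^\ell$, whose derivative bound controls variation along paths \emph{in} $M$, i.e., with respect to the intrinsic distance on $M$ rather than the ambient Euclidean norm $\|y_{k+1}-x\|$ appearing on the right. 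On a compact embedded manifold the intrinsic and extrinsic distances are comparable (and $y_{k+1}$ remains in the component of $x$ once $\ep$ is small), so this only introduces a constant depending on $M$, which is harmless because $\ep$ is allowed to depend on $M$; but as literally written the inequality is not justified. Finally, the strict inequality in the statement fails in the degenerate case $\sup_i\|g_i(x)\|=0$, where your computation correctly shows both sides vanish; that is a defect of the statement as quoted, not of your argument.
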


As we proceed, we will consider displacements in $\bb{R}^\ell$ of $n$ different functions simultaneously using an $n \times \ell$ displacement matrix.  The following elementary bound concerning entries of a matrix will be useful.  
\begin{Lem} 
\label{lem.matrixbds}
 Let $A$ be an $n \times \ell$ matrix, with row vectors ${a}_i$ and columns $v_j$.  Then for all $1 \leq i \leq n$, 
 $$ \| {a}_i \| \leq \sqrt{\ell}  \sup \{ \| {v}_j  \|, 1\leq j \leq \ell \}. $$
\end{Lem}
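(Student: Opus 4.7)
The plan is to exploit the fact that each entry of the matrix is simultaneously a component of a row and a component of a column, so the entry is bounded by the Euclidean norm of the column containing it. Let $M = \sup\{\|v_j\| : 1 \leq j \leq \ell\}$, and denote the $(i,j)$-entry by $A_{ij}$. Since $A_{ij}$ is one of the coordinates of the column vector $v_j$, I have the trivial bound $|A_{ij}| \leq \|v_j\| \leq M$.

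Next I would expand the Euclidean norm of the row $a_i$ using these componentwise bounds. Specifically,
\[
\|a_i\|^2 \;=\; \sum_{j=1}^{\ell} A_{ij}^2 \;\leq\; \sum_{j=1}^{\ell} \|v_j\|^2 \;\leq\; \ell\, M^2,
\]
and taking square roots gives $\|a_i\| \leq \sqrt{\ell}\, M$, which is exactly the desired inequality.

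There is no real obstacle here; the statement is essentially the observation that the $\ell^\infty$-to-$\ell^2$ norm ratio on $\bb{R}^\ell$ is $\sqrt{\ell}$, combined with the trivial fact that the supremum over entries of a row is dominated by the supremum over column norms. The only thing to be careful about is keeping the role of rows and columns straight, since the bound would have a $\sqrt{n}$ (rather than $\sqrt{\ell}$) if one tried to bound a column by row norms instead.
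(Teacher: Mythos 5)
Your proof is correct and follows essentially the same route as the paper: both arguments reduce to the chain $\|a_i\|^2 \leq \sum_{j=1}^{\ell} \|v_j\|^2 \leq \ell \, \sup_j \|v_j\|^2$. The only (immaterial) difference is that you justify the first inequality by bounding each entry $A_{ij}^2 \leq \|v_j\|^2$ term by term, while the paper passes through the identity $\sum_i \|a_i\|^2 = \sum_j \|v_j\|^2$ for the sum of all squared entries.
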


\begin{proof}
Note that
$$ \sum_{i=1}^{n} \| {a}_i \|^2 = \sum_{j=1}^{\ell} \| {v}_j \|^2, $$
since both of these quantities give the sum of the square of all the entries of $A$.  Therefore,  for all $1 \leq i \leq n $,  
$$ \| {a}_i \|^2 \leq  \sum_{i=1}^{n} \| {a}_i \|^2 = \sum_{j=1}^{\ell} \| {v}_j \|^2 \leq \ell  \sup \{ \| {v}_j  \|^2, 1\leq j \leq \ell \}. $$
\end{proof}

It will also be useful to rewrite the composition $a^kb_ia^{-k}$ in terms of the generators $b_1, \dots, b_n$. Here $(A^k)_{ij}$ is the entry of $A^k$ in the $(i,j)$ position.

\begin{Lem} 
\label{lem.fk}
For generators $a$ and $b_i$ of $\Gamma_A$,   $$a^{k}b_ia^{-k} = \prod_ja_j^{(A^k)_{ij}}.$$
\end{Lem}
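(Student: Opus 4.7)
The plan is to prove the identity by induction on $k \ge 0$; the negative-$k$ case is only needed when $A^{-1}$ also has integer entries, and an essentially identical argument applies once one checks that $a^{-1}b_ia = \prod_j b_j^{(A^{-1})_{ij}}$ using the defining relation.

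For the base case $k=1$, the identity $ab_ia^{-1} = \prod_j b_j^{A_{ij}}$ is precisely the defining relation of $\Gamma_A$. (For $k=0$, we have $A^0 = I$ and the identity reduces to $b_i = b_i$.)

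For the inductive step, assume $a^kb_ia^{-k} = \prod_j b_j^{(A^k)_{ij}}$. Then I would write
\[
a^{k+1}b_ia^{-(k+1)} \;=\; a\bigl(a^kb_ia^{-k}\bigr)a^{-1} \;=\; a\Bigl(\prod_j b_j^{(A^k)_{ij}}\Bigr)a^{-1} \;=\; \prod_j \bigl(ab_ja^{-1}\bigr)^{(A^k)_{ij}},
\]
where the last step uses that conjugation by $a$ is a homomorphism. Applying the defining relation to each $ab_ja^{-1}$ and then using that the $b_m$'s pairwise commute to regroup the exponents gives
\[
\prod_j \Bigl(\prod_m b_m^{A_{jm}}\Bigr)^{(A^k)_{ij}} \;=\; \prod_m b_m^{\sum_j (A^k)_{ij}A_{jm}} \;=\; \prod_m b_m^{(A^{k+1})_{im}},
\]
which is the desired formula for $k+1$.

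There is no serious obstacle: the only point requiring a moment of care is that the rearrangement of the double product into a single product over $m$ requires commutativity of $b_1,\dots,b_n$, which is built into the presentation of $\Gamma_A$. The identification $\sum_j (A^k)_{ij}A_{jm} = (A^{k+1})_{im}$ is simply the definition of matrix multiplication.
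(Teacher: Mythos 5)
Your proof is correct and follows essentially the same route as the paper: induction on $k$ with the defining relation as the base case, conjugating the inductive hypothesis by $a$, applying the relation to each $ab_ja^{-1}$, and regrouping exponents via commutativity of the $b_j$'s to recognize the matrix product $(A^{k+1})_{im}$. No substantive differences.
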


\begin{proof} We use induction on $k$, where the group relation $ab_ia^{-1} = \prod_j b^{A_{ij}} $ establishes the base case. 
Suppose that $a^{k}b_ia^{k} = \prod b_\ell^{(A^k)_{i\ell}}$.
This implies that 
$$ a^{k+1}b_ia^{-(k+1)} = f \left( \prod_\ell b_\ell^{(A^k)_{i\ell}} \right)a^{-1} = \prod_\ell( ab_\ell a^{-1})^{(A^k)_{i\ell}}.$$
Applying the group relation to $ab_\ell a^{-1}$ we get that 
$$ a^{k+1}b_ia^{-(k+1)} = \prod_\ell \left( \prod_j b_j^{A_{\ell j}}\right)^{(A^k)_{i\ell}}  = \prod_j b_j^{\sum_\ell (A^k)_{i \ell}A_{\ell j}}.$$
Which shows that 
$a^{k+1}b_ia^{-(k+1)} = \prod_jb_j^{(A^{k+1})_{ij}}.$ 
\end{proof}

\section{Overview of Proof of Theorem \ref{thm.abcnoact}}

Suppose $\rho: \Gamma_A \to \textrm{Diff}^{\, 1}(M)$ is an action.  Let $\rho(a) = f$ and $\rho(b_i) = g_i$ denote the images of the generators of $\Gamma_A$ within $\textrm{Diff}^{\, r}(M)$.  For the action to be close to the identity means that the functions $f$ and $g_i$, $1 \leq i \leq n$ all have distance to the identity in $\mathcal{R}^1(G,M)$ less than $\ep$.  We apply our understanding of behavior of diffeomorphisms close to the identity to examine how displacements by the functions $g_i$ vary as we move from a point $x$ to $f(x)$.

In particular, we show that $g_i$-displacements of the point $x$ are related to those at the point $f(x)$ by the matrix $A$ that defines the group.  This is seen fairly easily in the case that the matrix $A = [n]$ is a $1 \times 1$ matrix.  Suppose $y = f(x),$ then  
$$\|g(x)-x\| = \|g f^{-1}(y) - f^{-1}(y)\|$$  
$$\approx  \frac{1}{\| D_{g(x)} f \| } \|fgf^{-1}(y) - (y)\|  \approx \|g^n (y) -y\|$$  
$$\approx n \|g(y) -y\|$$ 
The above approximations use that the derivative of $f$ has norm close to one, and that the displacement resulting from a composition can be viewed as a sum of displacements.  More generally, we establish for the functions $g_1, \dots, g_n$ that if we collect all displacements of a point $x \in \mathbb{R}^\ell$ into an $n \times \ell$ matrix $D(x)$, then $D(x) \approx AD(y).$

We then examine the consequences of having the displacement by the functions $g_1, \dots, g_n$ at different points related via the application of a hyperbolic matrix.  On the one hand, by assuming that the action is close to the identity, we are assuming that all the displacements by the functions $g_j$ are uniformly small.  On the other hand, displacements of the point $x$ are related to those of the point $f(x)$ by application of the hyperbolic matrix $A$. In order to conclude the proof, use compactness to choose $x$ to be the point with a sort of maximal displacement. Follow displacements along a partial orbit $x, f(x), \dots, f^k(x)$ until displacements have been expanded by $A$.  These expanded displacements contradict the choice of $x$ unless the original displacements were zero.  Note that the estimates provided in Lemma \ref{lem.bon2} depend on the number of functions that are composed, so we must fix the number of iterates needed to detect expansion.

It is of interest that this argument is very different from those that use Thurston's stability theorem.  The contradiction in this argument occurs at a place of maximal displacement for $\rho(b_i)$, and not a global fixed point.  In fact, one can locally define a faithful action $\rho$ with a global fixed point $p$ for which $D(\rho(g))(p) =id$: The standard affine action $\rho(a)(x) = \lambda x$, $\rho(b_i)(x) = x + v_i$ fixes infinity.  On a positive half-neighborhood of infinity we modify this action using ideas of Navas \cite{nav1}.  Conjugate $F = \phi^{-1} \circ f \circ \phi$ and $G_i = \phi^{-1} \circ g_i \circ \phi$, where $\phi = e^{\frac{1}{x}}$.  The result is an action defined on $[0,1)$ that fixes $0$, with $F'(0)=G_i'(0)=1$.   This can be extended by defining the functions $G_1, \dots, G_n$ to be the identity on $(-1,0)$ and any appropriate choice of $F$ on $(-1,0)$ to produce an action on the interval $(-1,1).$ 

\section{$g_i$-Displacement Along Orbits of $f$}

Before proceeding, we fix a number of constants.   Given an $n \times n$ matrix $A$ with no eigenvalues of modulus one there exists an $A$-invariant splitting
$$\bb{R}^n = \bb{E}^u \oplus \bb{E}^s $$
and constants $k \in \bb{N}$, $\theta^u > 1$, and $0 < \theta^s < 1$ with the following properties:
$$ \| A^k v \| > \theta^u\|v\| \ \ \textrm{for all} \ \ v \in \bb{E}^u \ \ \ \textrm{and} \ \ \ \| A^k v \| < \theta^s \|v\| \ \ \textrm{for all} \ \ v \in \bb{E}^s. $$
We fix $k$ to be the least positive integer that ensures this expansion and contraction, and fix the corresponding $\theta^u$ and $\theta^s$.  We also fix $N \in \bb{N}$ to based on the number of composed functions to which we will apply Lemma \ref{lem.bon2}.  In particular, set
$$ N > \max_i \sum_j \left|A^k_{ij}\right|. $$
  Next, fix $\alpha < 1/2$ and set
$$C_k = \frac{\| A^k \| + \alpha}{1- 2\alpha} $$
where $\| A^k \|$ denotes the operator norm.
Select $0< \eta < \frac{\alpha}{\sqrt{n \ell}}$ small enough to ensure that
$$ 2 \eta \sqrt{n \ell} (2C_k + 1) < \min \left\{ \theta^u - 1, 1 - \theta^s \right\}. $$
We now choose $\eta_1$ such that $d(f, id) < \eta_1$ implies that $d(f^k, id) < \eta$, and $d(f, id) < \eta$.
For a $C^1$ function, we know by definition that $f(x + y) - f(x) = Df(y) + o(y).$  Furthermore we an find an $\ep_0$ such that if $ \|y \| < \ep_0$, then  $o(y) < \eta\|y \|$.
Let $\ep_1$ be selected so that Lemma \ref{lem.bon2} holds for $N$ and $\eta$ as selected above. Finally, set $\ep = \min \{ \eta_1, \ep_1, \ep_0 \}.$  This will be the $\ep$ for which the theorem holds.

We now introduce some notation. All preliminary estimates apply to displacement vectors of the form $h(x)-x,$ where $h$ can be any of the generating diffeomorphisms $f$ or $g_i$, and $h(x)-x$ is a vector in $\mathbb{R}^\ell$.  For this reason, we write $H(x) = h(x)-x.$  We think of $H(x)$ as a vector that will sometimes be written in components as an $\ell$-dimensional row vector.   To each point $x$ in $M$, there is an associated $n \times \ell$ matrix, $D(x),$ of displacements by the functions $g_i$,
$$ D(x) = \left[  \begin{array}{c} G_1(x) \\ \vdots \\ G_n(x) \end{array} \right]. $$
Specifically, the $i$th row of $D$ is the $\ell$-dimensional row vector $g_i(x) -x$, corresponding to the displacement of $x$ by the diffeomorphism $g_i$.

Next, we examine how the displacement matrix $D(x)$ varies as we move along orbits of $f$.  We will see that $D(x) \approx AD(f(x))$.
 
\begin{Lem}
\label{lem.hyp} Let  $ (D(x))_j$ denote the $j$th column of the displacement matrix $D(x)$ at the point $x$.  If $y = f^{k}(x)$, then 
$$ \| (D(x))_j - A^k(D(y))_j \| < \eta \sqrt{n \ell} \left(  2 \sup_j \| (D(x))_j \| +  \sup_j \| (D(y))_j \| \right).$$ for all $ 1 \leq j \leq \ell$.
\end{Lem}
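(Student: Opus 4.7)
The plan is to prove the estimate row by row. First I show that for each $i$ the $i$-th row of $D(x)$, namely $G_i(x)$, is close to $\sum_j (A^k)_{ij} G_j(y)$, which is the $i$-th row of $A^k D(y)$; then I convert this collection of row bounds into a column bound by two applications of Lemma \ref{lem.matrixbds}, which introduces the factor $\sqrt{n\ell}$.

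For a fixed $i$, I begin by observing that applying $f^k$ does not change the displacement vector $G_i(x)$ by much. Since $d(f^k, id) < \eta$ we have $\|D_z f^k - I\| < \eta$ pointwise, and since $\|G_i(x)\| < \ep \leq \ep_0$, the $C^1$ expansion of $f^k(x + G_i(x))$ together with the choice of $\ep_0$ gives
$$\|f^k(g_i(x)) - f^k(x) - G_i(x)\| \leq 2\eta \|G_i(x)\|.$$
Next, Lemma \ref{lem.fk} and the homomorphism property of $\rho$ yield $f^k g_i f^{-k} = \prod_j g_j^{(A^k)_{ij}}$, so writing $y = f^k(x)$ gives $f^k(g_i(x)) - f^k(x) = \prod_j g_j^{(A^k)_{ij}}(y) - y$. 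Expressing this composition as $h_1 \circ \cdots \circ h_M$ with each $h_\ell$ equal to some $g_j^{\pm 1}$, where $M = \sum_j |(A^k)_{ij}| < N$ by our choice of $N$, Lemma \ref{lem.bon2} gives
$$\Bigl\|\prod_j g_j^{(A^k)_{ij}}(y) - y - \sum_{\ell=1}^{M} (h_\ell - id)(y)\Bigr\| < \eta \sup_{\ell} \|(h_\ell - id)(y)\|.$$
For factors $h_\ell = g_j$ the contribution is exactly $G_j(y)$; for factors $h_\ell = g_j^{-1}$ one has $(g_j^{-1} - id)(y) = -G_j(g_j^{-1}(y))$, which differs from $-G_j(y)$ by $O(\eta) \|G_j(y)\|$ via another application of the $C^1$ closeness of $g_j$ to the identity. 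Summing collapses the right-hand side to $\sum_j (A^k)_{ij} G_j(y)$ with total error controlled by a multiple of $\eta \sup_j \|G_j(y)\|$, and combining with the $f^k$-approximation above produces a row-wise bound of the form
$$\Bigl\|G_i(x) - \sum_j (A^k)_{ij} G_j(y)\Bigr\| \leq 2\eta \|G_i(x)\| + c\, \eta \sup_j \|G_j(y)\|$$
for an absolute constant $c$.

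To finish, I apply Lemma \ref{lem.matrixbds} twice. First, on the right-hand side of the row estimate, the inequality $\|G_i(z)\| \leq \sqrt{\ell} \sup_j \|(D(z))_j\|$ converts each row norm of $D(x)$ and $D(y)$ into a column sup norm. Second, Lemma \ref{lem.matrixbds} applied to the matrix $D(x) - A^k D(y)$ bounds each column norm by $\sqrt{n}$ times the supremum of its row norms, promoting the per-row estimates to the desired column estimate; multiplying the two factors produces the announced $\sqrt{n\ell}$. The main obstacle is the bookkeeping in the middle step: Lemma \ref{lem.bon2} only controls the sum of $(h_\ell - id)(y)$ evaluated at the common point $y$, and matching this to $\sum_j (A^k)_{ij} G_j(y)$ requires handling sign reversals for negative entries of $A^k$ while ensuring that all resulting $O(\eta)$ error terms combine into the specific coefficients $(2,1)$ in the final bound — the coefficient $2$ tracing back to the $f^k$-approximation step and the coefficient $1$ bundling together the contributions from Lemma \ref{lem.bon2} and the sign-correction for $g_j^{-1}$.
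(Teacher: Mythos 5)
Your proof is correct and follows essentially the same route as the paper's: the pointwise $C^1$ expansion of $f^k$ yields the $2\eta\|G_i(x)\|$ term, Lemma \ref{lem.fk} together with Lemma \ref{lem.bon2} gives the row-wise comparison with $\sum_j (A^k)_{ij}G_j(y)$, and Lemma \ref{lem.matrixbds} converts row estimates to column estimates, producing the factor $\sqrt{n\ell}$. Your explicit handling of the $g_j^{-1}$ factors arising from negative entries of $A^k$ addresses a point the paper's proof silently elides, and the resulting unspecified constant $c$ in place of $1$ on the $\sup_j\|G_j(y)\|$ term is harmless, since $\eta$ is fixed in advance and can be shrunk to absorb it.
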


\begin{proof}
Suppose we are given $f \in \diff$ with $d_{C^1}(f^k, id)< \eta$ and let $f^{-k}(y) =x$.  By the definition of the derivate, we know that
$$f^k(g_i(x)) = f^k(x) + D_{x}f^k(G_i(x)) + o(\|G_i(x) \|).$$
From the selection $\ep_0$, we know that
$$o(\|G_i(x)\|) < \eta \|G_i(x)\|, $$
thus allowing us to bound
$$ \| f^kg_i(x) - f^k(x) -D_{x}f^k(G_i(x)) \| < \eta \|G_i(x)\|.$$
Letting $x=f^{-k}(y)$, and using that $\| D_{x}f^k - Id \| < \eta $ this implies that
$$ \| (f^kg_{i}f^{-k}(y) - y) - (G_i(x)) \| < 2 \eta \| G_i(x) \|,$$
which, by Lemma \ref{lem.fk} is equivalent to the bound
$$ \left\| \left( \prod_{j} g_j^{(A^k)_{ij}} - id \right) (y)  - G_i(x)  \right\| < 2 \eta \| G_i(x) \|.$$
By Lemma \ref{lem.bon2}, we know that
$$ \left\| \left( \prod_j g_j^{(A^k)_{ij}} -id \right)(y) - \sum_j (A^k)_{ij}(g_j -id)(y) \right\| < \eta \sup_j \{ \| (g_j-id)(y)\| \}. $$
So that for each $i = 1,2, ...n$, and for all $x \in M,$
$$ \left\|  \left( A^k \left[ \begin{array}{c} G_1(y) \\ \vdots \\ G_n(y)
\end{array} \right]_i    - G_i(x) \right) \right\| < \eta \| G_i(x) \| + \eta \sup_j \|G_j(y) \| .$$

Specifically, this inequality tells us that the $i$th row of $D(x)$ differs very little in norm from the $i$th row of $A^kD(y)$.  This implies that the specific entries $(D(x))_{ij}$ and $(A^kD(y))_{ij}$ must also satisfy
 $$\left| (D(x))_{ij} - (A^kD(y))_{ij} \right| <   \eta \left(2  \sup_i \| G_i(x) \| +  \sup_j \|G_j(y) \| \right).$$
Therefore the columns must satisfy
$$ \| (D(x))_j - A^k(D(y))_j \| < \eta \sqrt{n} \left(  2 \sup_i \| G_i(x) \| +  \sup_j \|G_j(y) \| \right).$$
By application of Lemma \ref{lem.matrixbds}, we can bound the norms $\| G_i(x) \|$ and $\| G_i(y) \|$ using the columns of the corresponding displacement matrices:
$$ \| (D(x))_j - A^k(D(y))_j \| < \eta \sqrt{n \ell} \left(  2 \sup_j \| (D(x))_j \| +  \sup_j \| (D(y))_j \| \right).$$
\end{proof}

In order to prove the theorem we must modify the estimates of Lemma \ref{lem.hyp}, so that our upper bound depends on displacements at only one point.

\begin{Lem}
\label{lem.Ck}  There exists a constant $C_k > 0$ such that for all $x \in M$
$$\sup_i \| (D(x))_i \| < C_k \sup_j \| (D(f^k(x)))_j \| $$
\end{Lem}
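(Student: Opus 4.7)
The proof should be a direct consequence of Lemma \ref{lem.hyp} combined with the constants fixed at the start of Section 4. The idea is to use the triangle inequality on the estimate from Lemma \ref{lem.hyp} to bound $\|(D(x))_j\|$ in terms of $\|A^k(D(y))_j\|$ plus small error terms, then take the supremum over $j$ and solve for $\sup_j \|(D(x))_j\|$.

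Concretely, letting $y = f^k(x)$, Lemma \ref{lem.hyp} and the triangle inequality give
\begin{equation*}
\|(D(x))_j\| \leq \|A^k(D(y))_j\| + \eta\sqrt{n\ell}\bigl(2\sup_j \|(D(x))_j\| + \sup_j \|(D(y))_j\|\bigr)
\end{equation*}
for each $j$. Since $\|A^k(D(y))_j\| \leq \|A^k\| \cdot \|(D(y))_j\|$, taking the supremum over $j$ on the left and writing $X = \sup_j \|(D(x))_j\|$, $Y = \sup_j \|(D(y))_j\|$ yields
\begin{equation*}
X \leq \|A^k\|\,Y + 2\eta\sqrt{n\ell}\,X + \eta\sqrt{n\ell}\,Y.
\end{equation*}

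The key step is to verify that the coefficient of $X$ on the right is strictly less than $1$, so that one can solve for $X$. By the choice $\eta < \alpha/\sqrt{n\ell}$ made in Section 4 we have $\eta\sqrt{n\ell} < \alpha < 1/2$, hence $1 - 2\eta\sqrt{n\ell} > 1 - 2\alpha > 0$. Rearranging gives
\begin{equation*}
X \leq \frac{\|A^k\| + \eta\sqrt{n\ell}}{1 - 2\eta\sqrt{n\ell}}\, Y \leq \frac{\|A^k\| + \alpha}{1 - 2\alpha}\, Y = C_k\, Y,
\end{equation*}
which is the desired inequality with exactly the constant $C_k$ defined at the beginning of Section 4.

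I do not anticipate a real obstacle here; the lemma is essentially a bookkeeping consequence of Lemma \ref{lem.hyp}, and the constants in Section 4 have been rigged so that the denominator $1 - 2\eta\sqrt{n\ell}$ stays bounded away from zero. The only mild subtlety is to make sure the inequality in the statement is strict rather than non-strict — this follows because Lemma \ref{lem.hyp} is itself a strict inequality, and if $Y = 0$ then the estimate forces $X = 0$ as well, in which case the statement is interpreted vacuously (or one can replace the supremum comparison by a harmless $+\delta$ argument).
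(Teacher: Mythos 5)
Your proposal is correct and matches the paper's argument essentially step for step: the paper likewise applies Lemma \ref{lem.hyp} at a column $j_0$ realizing $\sup_j\|(D(x))_j\|$, uses the triangle inequality and $\eta\sqrt{n\ell}<\alpha$ to obtain $(1-2\alpha)\|(D(x))_{j_0}\| < (\|A^k\|+\alpha)\sup_j\|(D(y))_j\|$, and divides by $1-2\alpha$ to get the same constant $C_k$. Your remark about strictness in the degenerate case $Y=0$ is a reasonable caveat that the paper glosses over.
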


\begin{proof} It suffices to show for $\eta < \frac{\alpha}{\sqrt{n\ell}}$.  Let $x \in M$ be given.  Select $1 \leq j_0 \leq \ell$ such that 
$$ \| (D(x))_{j_0} \| = \sup_j \|(D(x))_j \|. $$
Applying Lemma \ref{lem.hyp} to $x$, $j_0$ and $y = f^k(x)$ we see that
$$ \| (D(x))_{j_0} - A^k(D(y))_{j_0} \| < \eta \sqrt{n \ell} \left(  2  \| (D(x))_{j_0} \| +  \sup_j \| (D(y))_j \| \right).$$
Which implies that 
$$(1 - 2 \alpha)  \| (D(x))_{j_0} \| <  \| A^k (D(y))_{j_0} \| + \alpha \sup_j \| (D(y))_j \| .$$
So we can conclude that
$$(1 - 2 \alpha)  \| (D(x))_{j_0} \| < ( \| A^k \| + \alpha ) \sup_j \| (D(y))_j \| .$$
Setting $\D C_k = \frac{\| A^k \| + \alpha}{1 - 2 \alpha} $ we get the desired result.
\end{proof}

\section{Proof of Theorem \ref{thm.abcnoact}}

We now conclude the proof of Theorem \ref{thm.abcnoact}.  
\begin{proof}
Choose $z$ and $j_1$ to be the values that attain the supremum
$$ \sup_{x \in M, 1 \leq j \leq \ell} \left\{ \| \pi_u( (D(x))_j) \|, \| \pi_s ((D(x))_j) \| \right\}. $$
We consider two different cases.  If the supremum is attained by 
$$ \| \pi_u( (D(z))_{j_1}) \| = \sup_{x \in M, 1 \leq j \leq \ell} \left\{ \| \pi_u( (D(x))_j) \|, \| \pi_s( (D(x))_j) \| \right\}, $$
we note that 
$$ \| (D(z))_{j_1} \| \leq \| \pi_u( (D(z))_{j_1}) \| +\| \pi_s((D(z))_{j_1}) \| \leq 2 \| \pi_u( (D(z))_{j_1}) \|. $$
Consider the point $x = f^{-k}(z)$, and apply Lemmas \ref{lem.hyp} and \ref{lem.Ck}.  We see that
$$ \| \pi_u( (D(x))_{j_1}) - \pi_u( A^k (D(z))_{j_1}) \| < 2 \eta \sqrt{n \ell}(2C_k + 1 ) \| \pi_u( (D(z))_{j_1}) \|. $$
We conclude that 
$$ \| \pi_u( (D(x))_{j_1}) \| > ( \theta^u - 2 \eta \sqrt{n \ell}(2C_k + 1 ) ) \| \pi_u( (D(z))_{j_1}) \| > \| \pi_u( (D(z))_{j_1} )\| ,$$
where the last inequality follows from our choice of $\eta$.

Likewise if the supremum is attained by $ \| \pi_s( (D(z))_{j_1}) \| $, we set $y =f^k(x)$, and are able to conclude
$$\| \pi_s( (D(y))_{j_1}) \| > ( \theta^s + 2 \eta \sqrt{n \ell}(2C_k + 1 ) ) \| \pi_s( (D(z))_{j_1}) \| > \| \pi_s( (D(z))_{j_1}) \|.$$
So we conclude $D(x) = 0$ for all $x$, thus proving the theorem.
\end{proof}

\end{document}